\theoremstyle{thmit} 
\newtheorem{thm}{Theorem}[section]
\newtheorem{lem}[thm]{Lemma}
\newtheorem{cor}[thm]{Corollary}
\newtheorem{definition}[thm]{Definition}
\theoremstyle{thmrm} 
\newtheorem*{rem}{Remark}
\title[Curvature estimates for submanifolds immersed into horocylinders]{Curvature estimates for submanifolds immersed into horoballs and horocylinders}
\author{G. Pacelli  Bessa}
\address{Departamento de Matem\'atica\\Universidade Federal do Cear\'a-UFC\\60455-760 Fortaleza, CE, Brazil.}
\author{Jorge H. de Lira}
\address{Departamento de Matem\'atica\\Universidade Federal do Cear\'a-UFC\\60455-760 Fortaleza, CE, Brazil.}
\author{Stefano Pigola}
\address{Sezione di Matematica - DiSAT\\
Universit\'a dell'Insubria - Como\\
via Valleggio 11\\
I-22100 Como, Italy}
\author{Alberto G. Setti}
\address{Sezione di Matematica - DiSAT\\
Universit\'a dell'Insubria - Como\\
via Valleggio 11\\
I-22100 Como, ITALY}
\keywords{Curvature estimates, submanifolds, Cartan-Hadamard manifolds, horoballs, horocylinders}
\subjclass[2010]{53C42, 58J50}
\thanks{The authors are grateful to the anonymous referee for a very careful reading of the paper. The third and the fourth authors acknowledge partial support by the Italian Miur, Prin 2010-2011 Variet{\`a} reali e complesse: Geometria, topologia e Analisi Armonica, unità locale di Milano-Bicocca, and by INdAM-GNAMPA. They also thank the Departamento de Matem{\' a}tica da Universidade Federal do Cear{\'a}, where this work was carried out, for its warm hospitality. The forth author is grateful to CAPES-Ci{\^ e}ncia Sem Fronteiras, grant 400794/2012-8, for financial support.}
\begin{document}

\maketitle

\begin{abstract}
We prove mean and sectional curvature estimates  for submanifolds confined into either a
horocylinder of $N\times L$ or a horoball of $N$, where $N$ is a Cartan-Hadamard manifold with
pinched curvature. Thus, these submanifolds behave in many respects like submanifolds immersed into
 compact balls and into cylinders over compact balls. The proofs rely on the Hessian comparison theorem for the Busemann function.
\end{abstract}

\section{Introduction}\label{s:1} A classical problem in Riemanninan geometry is to obtain curvature estimates for submanifolds under
extrinsic constraints. Jorge and  Xavier \cite{jorge-xavier},   showed that any complete $m$-dimensional Riemannian manifold $M$
with scalar curvature bounded below isometrically immersed into a normal ball $B_{N}(R)$
of a Riemannian manifold $N$ of  radius $R$ has mean curvature of $M$ satisfying
\begin{equation}
\sup_{M} \vert {\bf H }\vert
\geq \left\{\begin{array}{cll}
\sqrt{ A}\coth(\sqrt{A}R)& {\rm if}& K_{N}\leq -A\,\,{\rm on}\,\,B_{N}(R),\,\,A>0\\
&&\\
\displaystyle\frac{1}{R}& {\rm if}& K_{N}\leq 0\,\,{\rm on}\,\,B_{N}(R)\\
&&\\
\sqrt{A}\cot(\sqrt{ A}R)& {\rm if}& K_{N}\leq A\,\,{\rm on}\,\,B_{N}(R)\,\,{\rm and}\,\,AR< \pi/2,\,\,A>0.
 \end{array}\right.
\end{equation}This result was then generalized relaxing  the scalar curvature condition \cite{Karp-MathAnn}, and ultimately shown to hold provided $M$ is stochastically complete, \cite{PRS-PAMS}, \cite{PRS-Memoirs}. As a result, there are no minimal bounded immersions of a stochastically complete manifold into a Cartan-Hadamard manifold.

In another direction the result was extended even to the case of immersions into cylinders $B_N(R)\times \mathbb{R}^{\ell}$, $\ell < m$,  in a product manifold \cite{ABD-MathAnn}.

In our first theorem we prove mean curvature estimates valid for immersions of a stochastically complete manifold into (suitable) cylinders over unbounded bases. More precisely, let $N$ be an $n$-dimensional Cartan-Hadamard manifold  and let $\sigma$ be  a ray in $N$, with associated Busemann function $b_\sigma$.
The horoball in $N$ determined by $\sigma$ is the set
$$
\mathcal{B}_{\sigma,R}^{n}=\left\{  b_{\sigma}\leq R\right\}, \quad R>0,
$$
and, if $L$ is an arbitrary $\ell$-dimensional manifold, we say that the region
\[
\mathcal{B}_{\sigma,R}^{n,\ell}=\left\{  b_{\sigma}\leq R\right\}  \times L
\]
is a (generalized solid) horocylinder in $N\times L$. With this notation we have
\begin{thm}
\label{prop_submanifold}Let $(  N,g_{N})  $ be an $n$-dimensional
Cartan-Hadamard manifold with sectional curvature satisfying $-B\leq\mathrm{Sec}_{N}\leq-A$ and let
$\sigma:[0,+\infty)\rightarrow N$ be a ray of $N$. Let $(  L,g_{L}%
)  $ be any $\ell$-dimensional Riemannian manifold and let $f=(
f_{N},f_{L})  :\Sigma\hookrightarrow N\times L$ be an $m$-dimensional
isometric immersion with mean curvature vector field $\mathbf{H}$. Assume that
$m>\ell$. If $\Sigma$ is stochastically complete and $f(  \Sigma)  $
is contained in the horocylinder $\mathcal{B}_{\sigma,R}^{n,\ell}$, then%
\[
\sup_{\Sigma}\left\vert \mathbf{H}\right\vert \geq\frac{m-\ell}{m}\sqrt A.
\]
\end{thm}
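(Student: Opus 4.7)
The strategy is to view the Busemann function $b_\sigma:N\to\mathbb{R}$ as a barrier, pull it back to $\Sigma$, and exploit the weak maximum principle that characterizes stochastic completeness. The crucial analytic input is the Hessian comparison theorem: on a Cartan-Hadamard manifold with $\mathrm{Sec}_N \leq -A<0$, one has the pointwise inequality
\[
\nabla^2 b_\sigma(X,X) \;\geq\; \sqrt{A}\,\bigl(|X|^2 - \langle X,\nabla b_\sigma\rangle^2\bigr), \qquad X\in TN,
\]
obtained by passing to the limit in the classical Hessian comparison for distance functions and using $\coth(\sqrt{A}\,t)\to 1$ as $t\to\infty$. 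The lower bound $\mathrm{Sec}_N\geq -B$ guarantees regularity of $b_\sigma$.

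Set $\tilde b = b_\sigma\circ\pi_N$ on $N\times L$ and $u=\tilde b\circ f$ on $\Sigma$, so that $u\leq R$. For an orthonormal frame $\{E_i\}_{i=1}^{m}$ on $\Sigma$, write $df(E_i)=E_i^N+E_i^L$ according to the product splitting. The composition formula for the Laplacian together with $\nabla^{N\times L}\tilde b = (\nabla b_\sigma,0)$ yields
\[
\Delta_\Sigma u \;=\; \sum_{i=1}^{m} \nabla^2 b_\sigma(E_i^N,E_i^N) + m\,\langle \nabla b_\sigma, \mathbf{H}_N\rangle,
\]
where $\mathbf{H}_N$ denotes the $N$-component of $\mathbf{H}$. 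Combining this with the Hessian bound and the identity $\sum_i \langle E_i^N,\nabla b_\sigma\rangle^2 = |\nabla_\Sigma u|^2$, I would obtain
\[
\Delta_\Sigma u \;\geq\; \sqrt{A}\Bigl(\textstyle\sum_{i}|E_i^N|^2 - |\nabla_\Sigma u|^2\Bigr) + m\,\langle \nabla b_\sigma, \mathbf{H}_N\rangle.
\]
The hypothesis $m>\ell$ enters here: since the vectors $\{E_i^L\}$ all lie in the $\ell$-dimensional space $TL$, the Gram operator of $df|_L$ has at most $\ell$ non-zero eigenvalues, each bounded by $1$, so $\sum_i|E_i^L|^2\leq \ell$ and therefore $\sum_i |E_i^N|^2 \geq m-\ell$.

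To eliminate the $|\nabla_\Sigma u|^2$ contribution I would apply the weak maximum principle not to $u$ but to the bounded positive function $w = e^{\sqrt{A}\,u}$. A direct computation gives
\[
\Delta_\Sigma w \;=\; \sqrt{A}\, w\,\bigl(\Delta_\Sigma u + \sqrt{A}\,|\nabla_\Sigma u|^2\bigr) \;\geq\; \sqrt{A}\, w\,\bigl(\sqrt{A}(m-\ell) - m|\mathbf{H}|\bigr),
\]
in which the gradient term cancels precisely because the exponent was chosen equal to $\sqrt{A}$. Stochastic completeness of $\Sigma$ then supplies a sequence $\{x_k\}$ with $w(x_k)\to \sup_\Sigma w>0$ and $\Delta_\Sigma w(x_k)\to 0^+$; passing to the limit yields $\sqrt{A}(m-\ell)\leq m\sup_\Sigma|\mathbf{H}|$, which is the asserted estimate. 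The only delicate point is this matching between the coefficient in the Hessian comparison and the exponent needed to cancel $|\nabla_\Sigma u|^2$; once that coincidence is noticed, the remainder of the argument is routine bookkeeping.
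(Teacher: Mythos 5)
Your proof is correct and follows essentially the same route as the paper: the Hessian comparison for the Busemann function, the trace estimate $\mathrm{tr}_\Sigma f_L^*(g_L)\leq\ell$ coming from $m>\ell$, and the weak maximum principle applied to $e^{\sqrt{A}\,b_\sigma}\circ f$. The only (immaterial) difference is bookkeeping: the paper exponentiates on $N$ first, so the cancellation of the $db_\sigma\otimes db_\sigma$ term occurs in Corollary \ref{cor_eb} ($\mathrm{Hess}\,u\geq A\,u\,g_N$), whereas you pull back $b_\sigma$ and exponentiate on $\Sigma$, cancelling the $|\nabla_\Sigma u|^2$ term there instead.
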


We explicitly note that the result remains true when the fibre $L$ degenerates to a $0$-dimensional point and, hence,
the horocylinder reduces to a horoball.

Since bounded mean curvature submanifolds properly immersed into a complete ambient manifold of
bounded sectional curvature are stochastically complete, see for instance
\cite{PRS-Memoirs}, as a corollary we have

\begin{thm} \label{th_meancurv}
Let $(  N,g_{N})  $ be an $n$-dimensional Cartan-Hadamard manifold with sectional curvature
satisfying $-B\leq\mathrm{Sec}_{N}\leq-A<0$ and let $(  L,\,g_{L})  $
be any complete $\ell$-dimensional Riemannian manifold with sectional curvature $\mathrm{Sec}_{L}\geq -C^{2}$, where $A,B,C$ are positive constants. If
$f\colon \Sigma\hookrightarrow N\times L$ is an $m$-dimensional properly immersed
submanifold with $f(\Sigma)$ inside a horocylinder of $N\times L$ and  $m>\ell$ then the mean
curvature vector $\mathbf{H}$ of the immersion satisfies%
\[
\sup_{\Sigma}\left\vert \mathbf{H}\right\vert \geq\frac{m-\ell}{m}\,\sqrt A.
\]
\end{thm}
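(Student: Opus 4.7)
The plan is to obtain Theorem~\ref{th_meancurv} as a quick corollary of Theorem~\ref{prop_submanifold}: the only step to fill in is the verification that $\Sigma$ is stochastically complete under the present hypotheses. First I reduce to the case in which $|\mathbf{H}|$ is bounded on $\Sigma$: if $\sup_{\Sigma}|\mathbf{H}|=+\infty$, the claimed inequality holds trivially, so throughout the rest of the argument I may assume that $\mathbf{H}$ is bounded.

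Next I verify that the ambient product $N\times L$ has sectional curvature bounded from below. Since the Riemann curvature tensor of a Riemannian product splits as the orthogonal sum of the curvature tensors of the factors, a short computation shows that for any $2$-plane $\pi\subset T(N\times L)$ one has $\mathrm{Sec}_{N\times L}(\pi)\geq -\max\{B,C^{2}\}$; the key observation is that the denominator $|X\wedge Y|^{2}$ dominates $|X_{N}\wedge Y_{N}|^{2}+|X_{L}\wedge Y_{L}|^{2}$ (the mixed terms being non-negative by Cauchy--Schwarz), while the numerator splits cleanly into contributions from the two factors. In particular, $N\times L$ is a complete Riemannian manifold with sectional curvature bounded from below.

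At this point I invoke the stochastic completeness criterion recalled in the paragraph preceding the theorem (and proved in \cite{PRS-Memoirs}): a properly immersed submanifold with bounded mean curvature inside a complete manifold of sectional curvature bounded from below is stochastically complete. Hence $\Sigma$ is stochastically complete, $f(\Sigma)$ lies inside the horocylinder by assumption, and $m>\ell$, so Theorem~\ref{prop_submanifold} immediately yields the desired bound $\sup_{\Sigma}|\mathbf{H}|\geq \tfrac{m-\ell}{m}\sqrt{A}$.

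The argument contains no genuinely difficult step; the only technical point is the sectional curvature estimate for the Riemannian product $N\times L$, which is routine. All the real work is concentrated in Theorem~\ref{prop_submanifold} (and in the Hessian comparison for the Busemann function underlying it), so this corollary is really a packaging of those results via a black-box result on stochastic completeness of properly immersed submanifolds.
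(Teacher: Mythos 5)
Your proposal is correct and follows essentially the same route as the paper, which obtains Theorem~\ref{th_meancurv} as a corollary of Theorem~\ref{prop_submanifold} by citing the result in \cite{PRS-Memoirs} that properly immersed submanifolds of bounded mean curvature in a complete ambient manifold with (lower) bounded sectional curvature are stochastically complete. The only difference is that you spell out the details the paper leaves implicit, namely the trivial reduction to $\sup_{\Sigma}|\mathbf{H}|<+\infty$ and the lower bound $\mathrm{Sec}_{N\times L}\geq-\max\{B,C^{2}\}$ for the product metric, both of which are handled correctly.
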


Our second result is the following sectional curvature lower estimate in the spirit of the classical theorem by Jorge-Koutroufiotis, \cite{JK-AJM}.
We point out that, although it is stated for submanifolds in a horoball, one can prove a version for submanifolds contained in a
horocylinder of $N\times L$, where $\mathrm{Sec}_L\geq -B$, under suitably modified assumptions on the dimensions.

\begin{thm}\label{th_JK}
Let $(  N,g_{N})  $ be an $n$-dimensional Cartan-Hadamard manifold with sectional curvature
satisfying $-B\leq\mathrm{Sec}_{N}\leq-A<0$  where $A,B$ are positive constants. Let
$f\colon\Sigma\hookrightarrow N$ be an $m$-dimensional
 submanifold  properly immersed with $f(M)$ inside a horoball of $N$ and $n\leq 2m-1$ then the sectional curvature of $\Sigma$ satisfies the estimate%
\[
\sup_{\Sigma}\mathrm{Sec}_\Sigma \geq A-B.
\]
\end{thm}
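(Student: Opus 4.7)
The plan is to adapt the classical Jorge--Koutroufiotis argument, replacing the distance function from a fixed point with the Busemann function $b_\sigma$. The key analytic input is the Hessian comparison for $b_\sigma$ on the Cartan--Hadamard manifold $N$ with $\mathrm{Sec}_N\le -A$, namely
\[
\mathrm{Hess}^N b_\sigma(V,V)\ge \sqrt{A}\bigl(|V|^2-\langle V,\nabla^N b_\sigma\rangle^2\bigr),
\]
obtained by passing to the limit along $\sigma$ in the standard Hessian comparison for the distance function.

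First, I would apply the Hessian form of the Omori--Yau maximum principle on $\Sigma$ to the bounded function $u:=b_\sigma\circ f$, producing a sequence $\{p_k\}\subset\Sigma$ with $u(p_k)\to \sup_\Sigma u\le R$, $|\nabla^\Sigma u(p_k)|\to 0$, and $\mathrm{Hess}^\Sigma u(p_k)(X,X)\le \tfrac{1}{k}|X|^2$ for every tangent vector $X$. At each $p_k$ the standard decomposition
\[
\mathrm{Hess}^\Sigma u(X,X)=\mathrm{Hess}^N b_\sigma(df X,df X)+\langle\nabla^N b_\sigma,\alpha(X,X)\rangle,
\]
combined with the Hessian comparison above and the vanishing of the tangential component $(\nabla^N b_\sigma)^\top=\nabla^\Sigma u$, yields
\[
\langle\alpha(X,X),\nabla^N b_\sigma\rangle\le -\sqrt{A}+o_k(1)
\]
uniformly for unit $X\in T_{p_k}\Sigma$. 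Since $\nabla^N b_\sigma$ is a unit vector in $N$, Cauchy--Schwarz then forces $|\alpha(X,X)|^2\ge A-o_k(1)$ for every unit tangent $X$ at $p_k$.

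Next, the hypothesis $n\le 2m-1$ means that the codimension of the immersion is strictly less than the dimension of $\Sigma$, which is exactly the setting of Otsuki's lemma: at each $p_k$ there exist orthonormal vectors $X_k,Y_k\in T_{p_k}\Sigma$ with $\alpha(X_k,X_k)=\alpha(Y_k,Y_k)$ and $\alpha(X_k,Y_k)=0$. The Gauss equation for this 2-plane collapses to
\[
\mathrm{Sec}_\Sigma(X_k,Y_k)=\mathrm{Sec}_N(df X_k,df Y_k)+|\alpha(X_k,X_k)|^2,
\]
and combining $\mathrm{Sec}_N\ge -B$ with the estimate $|\alpha(X_k,X_k)|^2\ge A-o_k(1)$ from the previous step yields $\mathrm{Sec}_\Sigma(X_k,Y_k)\ge A-B-o_k(1)$. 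Passing to the supremum over $\Sigma$ and letting $k\to\infty$ closes the argument.

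The hard part is the first step: verifying that the Hessian form of the Omori--Yau maximum principle is available on $\Sigma$ under the sole hypothesis of proper immersion into the horoball. One expects to obtain this either by constructing on $\Sigma$ a suitable Hessian-controlled exhaustion function out of the ambient distance from a base point of $N$ (in the spirit of the Pigola--Rigoli--Setti framework appealed to in Theorems~\ref{prop_submanifold} and~\ref{th_meancurv}), or by a contradiction argument that first extracts enough control on the second fundamental form to enable a classical Omori--Yau principle. Once this analytic tool is in place, the remainder of the proof is essentially the algebraic combination of the Busemann Hessian comparison, Otsuki's lemma and the Gauss equation.
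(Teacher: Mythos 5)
Your second half (Otsuki's lemma under $n\le 2m-1$ plus the Gauss equation, applied along a sequence of almost-maximum points) is exactly the paper's argument. The gap is in the first step, and you have identified it yourself without filling it: the Hessian form of the Omori--Yau maximum principle is an \emph{intrinsic} property of $\Sigma$, and nothing in the hypotheses --- a proper immersion confined to a horoball, with no curvature or scalar curvature assumption on $\Sigma$ --- provides it. The paper is explicit about this: in the remark following the proof it observes that the conclusion would follow directly from \eqref{JK1} \emph{if} the weak maximum principle for the Hessian held, and that this requires an extra hypothesis such as $\mathrm{Scal}_\Sigma\ge -G(\rho_\Sigma)$ with $G^{-1/2}$ integrable at infinity. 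Your two suggested repairs do not work as stated: an exhaustion built from $\rho_N\circ f$ has Hessian on $\Sigma$ containing the term $\langle\nabla\rho_N,\mathrm{II}(X,X)\rangle$, which is not controlled since $\mathrm{II}$ is not assumed bounded, so the Pigola--Rigoli--Setti criterion does not apply; and ``extracting control on $\mathrm{II}$ first'' is circular, since bounding $\mathrm{II}$ from below is the very conclusion you need.

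What the paper does instead (following \cite{BLP-AMPA}) is a penalization that bypasses any maximum principle: set $h_k=w-\tfrac1k[\log(\rho_N\circ f+1)+1]$ with $w=e^{\sqrt A\,b_\sigma}\circ f$. Confinement in the horoball bounds $w$ above, and properness makes the penalty term blow up at infinity, so $h_k$ attains a genuine maximum at some $x_k$, where $\mathrm{Hess}\,h_k\le 0$. The decisive point is that the uncontrolled second fundamental form term produced by $\mathrm{Hess}[\log(\rho_N\circ f+1)]$ enters with the small factor $\tfrac1{k[\rho_N(f(x_k))+1]}$ and is absorbed into the coefficient of $|\mathrm{II}(X_k,X_k)|$ on the favorable side of the inequality; together with the check that $w(x_k)\ge w(\bar x)>0$ (comparing $h_k(x_k)\ge h_k(\bar x)$ at a minimum point $\bar x$ of $\rho_N\circ f$), this yields $|\mathrm{II}(X_k,X_k)|\ge[\sqrt A+o(k^{-1})]|X_k|^2$ with no assumption on the intrinsic geometry of $\Sigma$. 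From there your Otsuki--Gauss conclusion goes through verbatim. So the proposal as written is incomplete precisely at the analytic heart of the theorem; replacing the appeal to Omori--Yau by this penalization argument is what makes the stated hypotheses sufficient.
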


As the geometric setting suggests, the main tool to obtain the results is the
analysis of the Busemann function in Cartan-Hadamard manifolds which will be described in the next Section.

\section{Busemann functions in Cartan-Hadamard spaces}

Throughout this section we let $(  N,g_{N})  $ be a Cartan-Hadamard
manifold, i.e.,  a  simply connected, complete Riemannian manifold of
non-positive sectional curvature. Further assumptions on $N$ will be
introduced when required. First, we are going to collect some of the basic
differentiable properties of the Busemann function of $N$ with respect to a
fixed geodesic ray. Since we are not aware of any specific reference we
decided to provide fairly detailed proofs.

Let $\sigma:[0,+\infty)\rightarrow N$ be a geodesic ray issuing from
$\sigma(  0)  =o$. Recall that, by its very definition, the
Busemann function of $N$ \ with respect to $\sigma$ is the function
$b_{\sigma}:N\rightarrow\mathbb{R}$ defined by
\[
b_{\sigma}(  x)  =\lim_{t\rightarrow+\infty}b_{\sigma(t)}(
x)
\]
where, for any fixed $t\geq0$,
\[
b_{\sigma(  t)  }(  x)  =r_{\sigma(  t)
}(  x)  -r_{\sigma(  t)  }(  o)
=r_{\sigma(  t)  }(  x)  -t.
\]
Here and below, $r_{p}(  x)  =d(  p,x)  $ denotes the
distance function from a point $p$. In some sense, the Busemann function
measures the distance of the points of $N$ from the point $\sigma(+\infty)  $ in the ideal boundary $\partial N$. Since $t\longmapsto
b_{\sigma(  t)  }(  x)  $ is monotone decreasing and
bounded by $\left\vert b_{\sigma(  t)  }(  x)
\right\vert \leq r_{o}(  x)  $, the limit $b_{\sigma}(
x)  $ exists and is finite. Moreover, the convergence is uniform on
compacts by Dini's theorem. Clearly, by the triangle inequality, each $b_{\sigma(
t)  }$ is $1$-Lipschitz (in fact, $|\nabla b_{\sigma(t))}| =1$ by the Gauss Lemma) and, therefore, so is also $b_{\sigma}$. In
particular, $b_{\sigma}$ is differentiable a.e. Actually, in the special case
of Cartan-Hadamard manifolds it was proved by P. Eberlein, \cite{HH-JDG}, that
$b_{\sigma}$ is a function of class $C^{2}$. To begin with we observe that the
gradient $\nabla b_{\sigma}$ of the Busemann function can be obtained via a
limit procedure from $\nabla b_{\sigma(  t)  }$ as $t\rightarrow
+\infty$.

\begin{lem}
[limit of gradients]\label{lemma_gradient}Assume that the sectional curvature
of $N$ is bounded, namely, there exists $B>0$ such that $-B\leq\mathrm{Sec}%
_{N}\leq0$. Then, for every $x\in N$,%
\[
\lim_{t\rightarrow+\infty}\nabla b_{\sigma(  t)  }(  x)
=\nabla b_{\sigma}(  x)
\]
and the convergence is locally uniform.
\end{lem}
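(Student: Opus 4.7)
The plan is to combine a uniform upper Hessian bound for the family $b_{\sigma(t)}$ with an Arzel\`a--Ascoli extraction, and then identify every subsequential limit of $\nabla b_{\sigma(t)}$ with $\nabla b_\sigma$. First I observe that $\nabla b_{\sigma(t)}(x)=\nabla r_{\sigma(t)}(x)$ is a unit vector at every $x\neq\sigma(t)$, by the Gauss Lemma recalled above. Hence the family is automatically bounded in sup-norm, and the only missing ingredient for local precompactness is equicontinuity on compact subsets of $N$.

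To obtain equicontinuity I would invoke the Hessian comparison theorem for the distance function $r_{\sigma(t)}$, using the lower sectional curvature bound $\mathrm{Sec}_N\geq -B$, which yields
\[
\mathrm{Hess}\, b_{\sigma(t)} \;\leq\; \sqrt{B}\coth\!\bigl(\sqrt{B}\,r_{\sigma(t)}\bigr)\,\bigl(g_N - db_{\sigma(t)}\otimes db_{\sigma(t)}\bigr)
\]
on $N\setminus\{\sigma(t)\}$. Fix a compact $K\subset N$ and set $C_K=\sup_K r_o$. For $t>C_K$ the triangle inequality gives $r_{\sigma(t)}(x)\geq t-C_K$ for every $x\in K$, so the right-hand side above is bounded by a constant depending only on $K$ and $B$ (using $\coth\to 1$). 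In particular, the unit vector fields $\nabla b_{\sigma(t)}$ form an equi-Lipschitz family on $K$ for $t$ large, and Arzel\`a--Ascoli extracts, from any sequence $t_k\to+\infty$, a subsequence along which $\nabla b_{\sigma(t_k)}$ converges locally uniformly on $N$ to some continuous vector field $X$.

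To identify $X$ with $\nabla b_\sigma$, I would test against arbitrary smooth curves $\gamma\colon[0,1]\to N$ via the fundamental theorem of calculus:
\[
b_{\sigma(t_k)}(\gamma(1)) - b_{\sigma(t_k)}(\gamma(0)) \;=\; \int_0^1 \bigl\langle \nabla b_{\sigma(t_k)}(\gamma(s)),\dot\gamma(s)\bigr\rangle\,ds.
\]
The left side tends to $b_\sigma(\gamma(1)) - b_\sigma(\gamma(0))$ by the locally uniform convergence $b_{\sigma(t)}\to b_\sigma$ already noted in the excerpt, while the right side tends to $\int_0^1\langle X(\gamma(s)),\dot\gamma(s)\rangle\,ds$ by the locally uniform convergence of the gradients. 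Since $b_\sigma$ is of class $C^2$ by Eberlein's theorem, this characterises $X$ as $\nabla b_\sigma$. Because every convergent subsequence has the same limit, the full family $\{\nabla b_{\sigma(t)}\}$ converges locally uniformly to $\nabla b_\sigma$.

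The main obstacle is precisely the Hessian estimate underlying equicontinuity: it crucially exploits the lower curvature bound $-B\leq\mathrm{Sec}_N$ (the upper bound alone being useless here), and one has to argue that the pole $\sigma(t)$ eventually leaves any prescribed compact so that the singularity of $\mathrm{Hess}\,r_{\sigma(t)}$ is harmless. Once this uniform estimate is in hand, identification of the limit and the passage from subsequential to full convergence are routine.
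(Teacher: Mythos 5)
Your argument is correct and follows the same skeleton as the paper's proof (Hessian comparison for $r_{\sigma(t)}$, equicontinuity plus equiboundedness, Arzel\`a--Ascoli along an arbitrary sequence $t_k\to+\infty$, identification of every subsequential limit, hence full convergence); the only genuine difference is how the limit field is identified. The paper does this distributionally: it tests $\nabla b_{\sigma(t_k)}$ against compactly supported vector fields, integrates by parts, uses $b_{\sigma(t_k)}\to b_\sigma$ and dominated convergence to conclude that the limit $\xi$ is the weak gradient of $b_\sigma$, hence equals $\nabla b_\sigma$ a.e.\ and then everywhere by continuity. You instead use the fundamental theorem of calculus along curves and pass to the limit in the line integral; this is equally valid and arguably more elementary, and in fact you do not even need to invoke Eberlein's $C^2$ regularity as you do: the identity $b_\sigma(\gamma(1))-b_\sigma(\gamma(0))=\int_0^1\langle X(\gamma(s)),\dot\gamma(s)\rangle\,ds$ for all smooth curves, with $X$ continuous, already forces $b_\sigma\in C^1$ with $\nabla b_\sigma=X$, so your route could be made self-contained. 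One small technical caveat: an upper bound $\mathrm{Hess}\,b_{\sigma(t)}\leq C\,g_N$ alone does not give an equi-Lipschitz bound on the gradients --- for that you need a bound on the full norm $\vert\mathrm{Hess}\,b_{\sigma(t)}\vert$, i.e.\ a two-sided estimate. This is harmless here because the Cartan--Hadamard hypothesis $\mathrm{Sec}_N\leq 0$ gives $\mathrm{Hess}\,r_{\sigma(t)}\geq 0$ for free, so that $0\leq\mathrm{Hess}\,b_{\sigma(t)}\leq C\,g_N$ on the compact set once the pole has left it (this is exactly the two-sided bound the paper quotes), but you should say so rather than suggest, as your closing remark does, that only the lower curvature bound matters for equicontinuity.
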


\begin{proof}
By the Hessian comparison theorem, \cite{Pe-book}, we know that, having fixed a compact ball
$K\subset N$, \ there exist $T=T(  K)  >0$ and $C=C(
K,B)  >0$ \ such that%
\[
\left\vert \mathrm{Hess}\,b_{\sigma(  t)  }\right\vert \leq C,
\]
for every $x\in K$ and for every $t\geq T$. It follows that for any sequence
$\left\{  t_{k}\right\}  \rightarrow+\infty$ the corresponding sequence of
gradients $\left\{  \nabla b_{\sigma(  t_{k})  }\right\}  $ is
eventually equi-continuous on $K$. Since it is equi-bounded as observed above,
we deduce that there exists a subsequence $\{\nabla b_{\sigma(t_{k_{j}})}\}$
that converges uniformly on $K$ to a continuous vector field $\xi$ on $K$. On
the other hand, the sequence $\{\nabla b_{\sigma (t_{k})}\}$ converges weakly
to $\nabla b_{\sigma}$ on compact sets. Indeed, if $V$ is a smooth vector
field supported in a ball $B_{R}$, then, by dominated convergence,%
\[
\int_{B_{R}} \left\langle \nabla b_{\sigma(  t_{k})  },V\right\rangle
=-\int_{B_{R}}b_{\sigma(  t_{k})  }\operatorname{div}%
V\rightarrow-\int_{B_{R}}b_{\sigma}\operatorname{div}V=\int_{B_{R}}\left\langle \nabla
b_{\sigma},V\right\rangle ,
\]
as claimed. It follows that%
\[
\xi=\nabla b_{\sigma}%
\]
a.e. on $K$ and, in fact, everywhere on $K$ by continuity. Since the selected
sequence $\left\{  t_{k}\right\}  $ was arbitrary, the required conclusion follows.
\end{proof}

In the above proof the conclusion is obtained  using the weak definition of
the gradient, which behaves well under limits, together with the fact the weak
gradient agrees with the classical gradient for sufficiently regular
functions. A similar trick will be used in the next result where we deduce a
comparison principle for the Hessian of the Busemann function. We put the following:
\begin{definition}
A function $h:N\rightarrow\mathbb{R}$ is said to satisfy the differential
inequality
\[
\mathrm{Hess}\,h\leq \mathcal{T}
\]
in the sense of distributions, where $\mathcal{T}$ is a symmetric $2$-tensor,
if the integral inequality
\[
\int_{N}h\left\{  \operatorname{div}(  V\operatorname{div}V)
+ \operatorname{div}D_{V}V\right\}  \leq\int_{N}\mathcal{T}(  V,V)
\]
holds for every smooth compactly supported vector field $V$.
\end{definition}

Clearly, in case $h$ is of class $C^{2}$, a double integration by parts shows that the
distributional inequality is equivalent to
\[
\int_{N}\mathrm{Hess}\,h(  V,V)  \leq\int_{N}\mathcal{T}(  V,V).
\]
The validity of this latter for every compactly supported vector field $V$, in turn, is equivalent to the pointwise inequality. Indeed, suppose
\[
\mathrm{Hess}_{x}\,h(v,v) > \mathcal{T}_{x}(v,v),
\]
for some $x\in M$ and some $v\in T_{x}M\setminus\{0\}$. Extend $v$ to any smooth vector field $V'$ on $M$. By continuity, there exists a neighborhood $\mathcal{U}$ of $x$ such that,
\[
\mathrm{Hess}\,h (V',V') > \mathcal{T}(V',V'), \;\text{ on } \mathcal{U}.
\]
To conclude, choose a smooth cut-off function $\xi: M \to \mathbb{R}$ such that $\mathrm{supp}\,\xi \subset \mathcal{U}$ and $\xi =1$ at $x$, and observe that $V = \xi V'$ violates the distributional inequality.

\begin{lem}
[Hessian comparison]\label{lemma_hessian}Assume that the sectional curvatures
of $N$ satisfy $$-B\leq\mathrm{Sec}_{N}\leq-A$$ \ for some constants $B\geq A>0$.
Then%
\[
\sqrt{A}(  g_{N}-db_{\sigma}\otimes db_{\sigma})  \leq
\mathrm{Hess}b_{\sigma}\leq\sqrt{B}(  g_{N}-db_{\sigma}\otimes
db_{\sigma})  ,
\]
in the sense of quadratic forms.
\end{lem}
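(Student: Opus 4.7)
The plan is to derive the inequality as a limit of the classical Hessian comparison estimates for the smooth distance functions $r_{\sigma(t)}$, using the distributional formulation introduced in the Definition above to bypass the need to know a priori that $b_{\sigma}$ is $C^{2}$. Since $b_{\sigma(t)}=r_{\sigma(t)}-t$ differs from $r_{\sigma(t)}$ by a constant, one has $\mathrm{Hess}\,b_{\sigma(t)}=\mathrm{Hess}\,r_{\sigma(t)}$ on $N\setminus\{\sigma(t)\}$. The standard Hessian comparison theorem, applied under the pinching $-B\leq\mathrm{Sec}_{N}\leq -A$, then yields the pointwise estimate
\[
\sqrt{A}\coth\bigl(\sqrt{A}\,r_{\sigma(t)}\bigr)P_{t}
\;\leq\;\mathrm{Hess}\,b_{\sigma(t)}\;\leq\;
\sqrt{B}\coth\bigl(\sqrt{B}\,r_{\sigma(t)}\bigr)P_{t},
\]
where $P_{t}:=g_{N}-db_{\sigma(t)}\otimes db_{\sigma(t)}$ is the projection onto the orthogonal complement of $\nabla b_{\sigma(t)}$.

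Next, I would fix a smooth, compactly supported vector field $V$ on $N$ and, for every $t$ large enough that $\sigma(t)\notin\operatorname{supp}V$, test the above inequality against $V\otimes V$ and integrate over $N$. A double integration by parts of exactly the type described after the Definition recasts the resulting integral inequalities in distributional form:
\[
\int_{N}b_{\sigma(t)}\bigl\{\operatorname{div}(V\operatorname{div}V)+\operatorname{div}D_{V}V\bigr\}
\;\leq\;
\int_{N}\sqrt{B}\coth\bigl(\sqrt{B}\,r_{\sigma(t)}\bigr)P_{t}(V,V),
\]
together with the analogous reverse bound involving $\sqrt{A}\coth(\sqrt{A}\,r_{\sigma(t)})$.

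Finally, I would pass to the limit as $t\to+\infty$. On the compact set $\operatorname{supp}V$ one has $r_{\sigma(t)}(x)\geq t-r_{o}(x)\to+\infty$ uniformly, hence both $\coth(\sqrt{A}\,r_{\sigma(t)}(x))$ and $\coth(\sqrt{B}\,r_{\sigma(t)}(x))$ tend uniformly to $1$. Combined with the uniform convergence $b_{\sigma(t)}\to b_{\sigma}$ (Dini's theorem) and the locally uniform convergence $\nabla b_{\sigma(t)}\to\nabla b_{\sigma}$ provided by Lemma \ref{lemma_gradient}, dominated convergence delivers
\[
\int_{N}b_{\sigma}\bigl\{\operatorname{div}(V\operatorname{div}V)+\operatorname{div}D_{V}V\bigr\}
\;\leq\;
\int_{N}\sqrt{B}\,(g_{N}-db_{\sigma}\otimes db_{\sigma})(V,V),
\]
together with the companion lower bound with $\sqrt{A}$. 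This is precisely the distributional Hessian inequality of the Definition; combined with Eberlein's $C^{2}$ regularity of $b_{\sigma}$ (equivalently, with the observation following the Definition) it upgrades to the pointwise inequality as quadratic forms stated in the Lemma. The main delicate point is the simultaneous control of the three limits, the Busemann function, its gradient, and the $\coth$ factor, on the compact support of $V$, which is what legitimises the dominated convergence argument.
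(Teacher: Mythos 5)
Your proof is correct and follows essentially the same route as the paper: the pointwise Hessian comparison for $b_{\sigma(t)}$, its reformulation in the distributional sense against compactly supported test fields, passage to the limit $t\to+\infty$ using the uniform convergence $b_{\sigma(t)}\to b_{\sigma}$, Lemma \ref{lemma_gradient} and dominated convergence, and finally the upgrade to a pointwise inequality via the $C^{2}$ regularity of $b_{\sigma}$ and the equivalence noted after the Definition. Your explicit tracking of the $\coth$ factors, which tend uniformly to $1$ on $\operatorname{supp}V$, is exactly the intended argument.
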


\begin{proof}
Let us show how to prove the upper bound of $\mathrm{Hess}\,b_{\sigma}$.
Obviously, the lower bound can be obtained using exactly the same arguments.
By the Hessian comparison theorem, having fixed a ball $B_{R}$ of $N$, we find
$T>0$ such that, for every $t\geq T$,%
\[
\mathrm{Hess}\,b_{\sigma(  t)  }\leq \sqrt{B}\coth(
r_{\sigma(  t)  }\sqrt{B})  \left\{  (  g_{N}%
-db_{\sigma(  t)  }\otimes db_{\sigma(  t)  })
\right\}  ,\text{ on }B_{R}.
\]
In particular, this inequality holds in the sense of distribution, namely, for
every vector field $V$ compactly supported in $B_{R},$ it holds%
\[
\int_{N}b_{\sigma(  t)  }\left\{  \operatorname{div}(
V\operatorname{div}V)  + \operatorname{div}D_{V}V\right\}  \leq\int
_{N}\left\{  \left\vert V\right\vert ^{2}-\left\langle \nabla b_{\sigma(
t)  },V\right\rangle ^{2}\right\}  .
\]
Evaluating this latter along a sequence $\left\{  t_{k}\right\}
\rightarrow+\infty$, using Lemma \ref{lemma_gradient}, and applying the
dominated convergence theorem we deduce that the integral inequality%
\[
\int_{N}b_{\sigma}\left\{  \operatorname{div}(  V\operatorname{div}%
V)  + \operatorname{div}D_{V}V\right\}  \leq\int_{N}\left\{  \left\vert
V\right\vert ^{2}-\left\langle \nabla b_{\sigma},V\right\rangle ^{2}\right\}
\]
holds for every  smooth vector field compactly supported in $B_{R}$. To conclude we now
recall that this is equivalent to the pointwise inequality%
\[
\mathrm{Hess}\,b_{\sigma}\leq\sqrt{B}(  g_{N}-db_{\sigma}\otimes
db_{\sigma})  ,
\]
on $B_{R}$.
\end{proof}
We remark that a version of the above lemma was also observed without proof in
\cite{CM}.

\begin{cor}
\label{cor_eb}Keeping the notation and the assumptions of Lemma
\ref{lemma_hessian}, let $u:N\rightarrow\mathbb{R}$ be the smooth function
defined by%
\[
u(  x)  =e^{\sqrt{ A}\; b_{\sigma}(  x)  }.
\]
Then%
\[
A\, u\cdot g_{N}\leq\mathrm{Hess}\, u  \leq\sqrt{AB}\,u\cdot
g_{N}.
\]
\end{cor}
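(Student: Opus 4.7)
The plan is to derive the estimates for $\mathrm{Hess}\,u$ directly from Lemma \ref{lemma_hessian} by a pointwise computation, using the chain rule for the Hessian of a composition with a $C^2$ real function. Since Eberlein's regularity result guarantees that $b_\sigma\in C^2$, we can work classically and no distributional manipulation is needed at this stage.

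First, I would write $u=\phi\circ b_\sigma$ with $\phi(t)=e^{\sqrt A\, t}$ and use the standard identity
\[
\mathrm{Hess}\,u=\phi''(b_\sigma)\,db_\sigma\otimes db_\sigma+\phi'(b_\sigma)\,\mathrm{Hess}\,b_\sigma,
\]
together with the fact $|\nabla b_\sigma|=1$ (from the uniform limit $|\nabla b_{\sigma(t)}|=1$ in Lemma \ref{lemma_gradient}). Since $\phi'=\sqrt A\,\phi$ and $\phi''=A\,\phi$, this yields
\[
\mathrm{Hess}\,u= A\,u\,db_\sigma\otimes db_\sigma+\sqrt A\,u\,\mathrm{Hess}\,b_\sigma.
\]

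Next I would plug in the two-sided bound from Lemma \ref{lemma_hessian}. For the lower estimate, using $\sqrt A (g_N-db_\sigma\otimes db_\sigma)\leq \mathrm{Hess}\,b_\sigma$ gives
\[
\mathrm{Hess}\,u\geq A\,u\,db_\sigma\otimes db_\sigma+A\,u\,(g_N-db_\sigma\otimes db_\sigma)=A\,u\,g_N,
\]
and the rank-one terms cancel exactly, producing the claimed lower bound. For the upper estimate, $\mathrm{Hess}\,b_\sigma\leq\sqrt B(g_N-db_\sigma\otimes db_\sigma)$ yields
\[
\mathrm{Hess}\,u\leq \sqrt{AB}\,u\,g_N+(A-\sqrt{AB})\,u\,db_\sigma\otimes db_\sigma.
\]
Since $A\leq B$, the coefficient $A-\sqrt{AB}=\sqrt A(\sqrt A-\sqrt B)$ is non-positive; as $db_\sigma\otimes db_\sigma$ is a positive semidefinite quadratic form and $u>0$, the correction term is non-positive and can be dropped, giving $\mathrm{Hess}\,u\leq \sqrt{AB}\,u\,g_N$.

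There is essentially no obstacle beyond bookkeeping: the only subtle point is checking the sign of the coefficient $A-\sqrt{AB}$, which is where the hypothesis $A\leq B$ (already built into Lemma \ref{lemma_hessian}) enters. Everything else is a one-line consequence of the chain rule and the Hessian comparison for $b_\sigma$.
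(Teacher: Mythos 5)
Your computation is correct and is exactly the argument the paper intends: the corollary is stated as an immediate consequence of Lemma \ref{lemma_hessian} via the chain rule $\mathrm{Hess}\,u = A\,u\,db_\sigma\otimes db_\sigma + \sqrt{A}\,u\,\mathrm{Hess}\,b_\sigma$ (legitimate since $b_\sigma$ is $C^2$), with the rank-one term cancelling in the lower bound and being dropped, thanks to $A\leq B$, in the upper bound. Note only that the fact $|\nabla b_\sigma|=1$ is not actually needed anywhere in your estimates, so invoking Lemma \ref{lemma_gradient} for it is harmless but superfluous.
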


\section{Proof of the main theorems}
We are now ready to give the proof of our results.
\begin{proof} [Proof of Theorem~\ref{prop_submanifold}]
Let
\[
w=u\circ f_{N}:\Sigma\rightarrow\mathbb{R}_{>0}%
\]
where $u:N\rightarrow\mathbb{R}$ is the smooth function defined in Corollary
\ref{cor_eb}. By the composition law for the Laplacians we have
\[
\Delta w=\mathrm{tr}_{\Sigma}\left\{  \mathrm{Hess} \, u  (
df_{N}\otimes df_{N})  \right\}  +du(  \mathrm{tr}_{\Sigma}%
Ddf_{N})  .
\]
On the other hand, by Corollary \ref{cor_eb}, 
\[
\mathrm{Hess} \, u  \geq A\, u\cdot g_{N}%
\]
so that
\begin{align*}
\Delta w  &  \geq w\left\{ A\, \mathrm{tr}_{\Sigma}g_{N}(  df_{N}\otimes
df_{N})  -m\sqrt{A}\left\vert \mathbf{H}\right\vert \right\} \\
&  =w\left\{ A \,\mathrm{tr}_{\Sigma}f_{N}^{\ast}(  g_{N})
-m\sqrt{A}\left\vert \mathbf{H}\right\vert \right\}  .
\end{align*}
Since $f^{\ast}g_{N\times L}=g_{\Sigma}$ then%
\begin{align*}
\mathrm{tr}_{\Sigma}f_{N}^{\ast}(  g_{N})   &  =m-\mathrm{tr}%
_{\Sigma}f_{L}^{\ast}(  g_{L}) \\
&  \geq m-\ell,
\end{align*}
and from the above we conclude that%
\begin{equation}
\Delta w(  x)  \geq mw(  x)  \left\{  A \frac{m-\ell}{m}%
-\sqrt{A}\sup_{\Sigma}\left\vert \mathbf{H}(  x)  \right\vert \right\}  .
\label{submanifold-laplacian}%
\end{equation}
Now we apply the weak maximum principle for the Laplacian, \cite{PRS-PAMS}, to
get%
\[
0\geq m\sup_{\Sigma}w\left\{  A\frac{m-\ell}{m} -\sqrt{A}\sup_{\Sigma}\left\vert
\mathbf{H}(  x)  \right\vert \right\}  ,
\]
as required.
\end{proof}

\begin{rem}
{\rm
By applying the strong maximum principle to (\ref{submanifold-laplacian}) we
can also obtain directly the following touching principle.\medskip

\noindent
Let $f:\Sigma\hookrightarrow N\times L$ be a complete, immersed
submanifold, where $N$ is a Cartan-Hadamard manifold of pinched negative
curvature and $L$ is any complete Riemannian manifold. Let us assume that (a) the
mean curvature $\mathbf{H}$ of the immersion satisfies $\left\vert
\mathbf{H}\right\vert \leq \sqrt{A}(  m-\ell)  /m$ and that (b) $f(
\Sigma)  $ is contained inside the horocylinder $\mathcal{B}_{\sigma
,R}^{n,\ell}$ and $f(  \Sigma)  \cap\partial\mathcal{B}_{\sigma
,R}^{n,\ell}\neq0.$ Then $f(  \Sigma)  =\partial\mathcal{B}_{\sigma
,R}^{n,\ell}.$
}
\end{rem}

\begin{rem}
{\rm
In a different direction, if $f=f_{N}:\Sigma\rightarrow\mathcal{B}_{\sigma
,R}^{n,0}=\left\{  b_{\sigma}\leq R\right\}\subset N$ is a proper immersion into a horoball of $N$ and
\[
\sup_{\Sigma}\left\vert \mathbf{H}(  x)  \right\vert <\sqrt{A}
\]
then $w$ is a bounded exhaustion function that violates the weak maximum
principle at infinity. By Theorem 32 in \cite{BPS-Revista} it follows that the
essential spectrum of $\Sigma$ is empty.
}
\end{rem}
The estimates for the Hessian of the Busemann function
allows us to obtain also the Jorge-Koutroufiotis type result stated in Theorem \ref{th_JK}.
This results give a further indication of the phenomenon according to which submanifolds of non-compact horoballs
behave in many respects like a submanifolds of compact balls.

\begin{proof}[Proof (of Theorem \ref{th_JK})]
The proof is similar to the arguments in \cite{BLP-AMPA}.
For every $k$ consider the function $h_k:\Sigma \to \mathbb{R}$
$$
h_k = w - \frac 1k [\log( \rho_N \circ f +1) +1 ],
$$
where, as above $w=e^{\sqrt{ A}\, b_\sigma}\circ f$ and $\rho_N$ denotes the Riemannian distance in $N$ from an origin $o$ in the complement of $\mathcal{B}_{\sigma,R}^{n,0}$.
Since $f(\Sigma)$ is contained in a horoball, the first summand is bounded above, and since the $f$ is proper in $N$, the second summand tends to infinity at infinity. It follows that for every $k$, $h_k$ attains an absolute maximum at a point $x_k$ where
$$
\mathrm{Hess}\, h_k = \mathrm{Hess}\, w - \frac 1k \mathrm{Hess} [\log( \rho_N \circ f +1) +1 ] \leq 0
$$
in the sense of quadratic forms.  Now,  according to our previous computations, for all  vectors $X_k\in T_{x_k}\Sigma$,
\begin{equation}
\label{JK1}
\mathrm{Hess}\,w (X_k,X_k)
\geq \sqrt A \,w(x_k)(\sqrt A |X_k|^2 - |\mathrm {II}(X_k,X_k)|),
\end{equation}
where $II$ is the second fundamental form of the immersion.
On the other hand, by the Hessian comparison theorem,
$$
\mathrm{Hess} \,\rho_N \leq \sqrt B \coth(\sqrt B \,\rho_N) (g_N - d\rho_N\otimes d\rho_N),
$$
and after some computation we obtain
\begin{multline*}
\mathrm{Hess}\,(
[\log( \rho_N\circ f +1) +1 ]
)
 (X_k,X_k)
\\
\leq \frac 1 {\rho_N(f(x_k))+1}
\left\{
\sqrt B \coth(\sqrt B \rho_N(f(x_k)))|X_k|^2
+ |\mathrm{II}(X_k,X_k)|
\right\}
\end{multline*}
Combining the two inequalities and rearranging we conclude that
\begin{multline*}
|\mathrm{II}(X_k,X_k)|
\biggl( \sqrt A w(x_k) + \frac 1 {k[\rho_N(f(x_k)) +1]}
\biggr)
\\
\geq
\biggl(
Aw(x_k) - \frac{\sqrt{B} \coth(\sqrt{B}\rho_N(f(x_k)) )}{k[\rho_N(f(x_k))+1]}
\biggr)
 |X_k|^2
\end{multline*}
Now notice that $w(x_k)$ is positive and bounded away from $0$. Indeed, if $\bar x$ is a point such that
$\rho_N(f(\bar x))= \min_\Sigma\rho_N(f(x)),$ then for every $k$ we have
$$
h_k(x_k) \geq h_k(\bar x)
$$
and therefore
$$
w(x_k) \geq w(\bar x) + \frac 1k \{ \log[\rho_N (f(x_k))+1] -  \log[\rho_N (f(\bar x))+1]\} \geq w(\bar x).
$$
Since $\rho_N(f(x_k))$ is also bounded away from zero, it follows that for every sufficiently large $k$, and every non zero $X_k\in T_{x_k}\Sigma$,
$$\begin{array}{lll}
|\mathrm{II}(X_k,X_k)|&\geq & {\displaystyle
\frac
{
Aw(x_k) - \displaystyle\frac{\sqrt{B} \coth(\sqrt{B}\,\rho_N(f(x_k)) )}{k[\rho_N(f(x_k))+1]}
}
{
\sqrt A w(x_k) +  \displaystyle\frac 1{k [\rho_N(f(x_k)) +1]}}|X_k|^2
} \\
&&\\
& =& [\sqrt A+ o(k^{-1})] |X_k|^2\end{array}
$$
In particular, $\mathrm{II} (X_k,X_k)>0$ for every sufficiently large $k$ and every $X_k\in T_{x_k}\Sigma\setminus\{0\}$
and we may apply Otsuki's lemma (see, e.g., \cite{KN-bookII}, p. 28) to find unit tangent vectors $X_k$ and $Y_k$ such that
$\mathrm{II} (X_k,X_k)=\mathrm{II}(Y_k,Y_k)$ and $\mathrm{II}(X_k,Y_k)=0$. The required conclusion now follows from Gauss equations as in the original proof:
\begin{equation*}
\begin{split}
\mathrm{Sec}_\Sigma (X_k,Y_k)&= \mathrm{Sec}_N (df X_k, df Y_k)  +  g_N (\mathrm{II}(X_k,X_k), \mathrm{II}(Y_k,Y_k))
- |\mathrm{II}(X_k,Y_k)|^2 \\&
\geq -B + A + o(k^{-1}).
\end{split}
\end{equation*}
\end{proof}
Again as in the classical proof, we note that the conclusion of the theorem follows directly from \eqref{JK1} if we assume
that the weak maximum principle for the Hessian holds, for then there exists a sequence $x_k$ such that $w(x_k)\to \sup_\Sigma w$ and
$$
\mathrm{Hess}\, w (x_k) \leq 1/k g_\Sigma,
$$
which together with \eqref{JK1} allows to conclude as in the last part of the above proof. In
 particular, the conclusion holds if $\mathrm{Scal}_\Sigma\geq - G(\rho_\Sigma)$  where the function $G$  is positive, non-decreasing and $G^{-1/2}$ is integrable at infinity. Indeed, assuming that $\mathrm{Sec}_\Sigma$ is bounded above, for otherwise the conclusion holds trivially, then $\mathrm{Sec}_\Sigma$ is bounded below by a multiple of $-G$ and the Omori-Yau maximum principle for the Hessian holds on $\Sigma$ (\cite{PRS-Memoirs}).

\bibliographystyle{line}

\end{document}